\documentclass[12pt,reqno]{amsart}
\usepackage{soul}
\usepackage{multicol,amsmath,graphics, mathtools, cancel,soul,color,bbm,amsfonts,dsfont,tikz,mathrsfs,amssymb,bm,bbold}
\usepackage{verbatim}
\usepackage[left=1in, right=1in, top=1.1in,bottom=1.1in]{geometry}
\usetikzlibrary{matrix,patterns,positioning}
\numberwithin{equation}{section}
 
\newcommand{\E}{\mathbb{E}}

\newcommand{\N}{\mathbb{N}}
\newcommand{\Pb}{\mathbb{P}}

\newcommand{\R}{\mathbb{R}}

\newcommand{\vertiii}[1]{{\left\vert\kern-0.25ex\left\vert\kern-0.25ex\left\vert #1
    \right\vert\kern-0.25ex\right\vert\kern-0.25ex\right\vert}}

\def\R{\mathbb{R}}

\def\dh2l{\mathbf{d}_{\mathbb{H}_{2\ell}}}
\def\d2{\mathbf{d}_2}

\newtheorem{thm}{Theorem}[section]
\newtheorem{lemma}[thm]{Lemma}

\newtheorem{definition}[thm]{Definition}

\newtheorem{theorem}{Theorem}[section]
\newtheorem{corollary}[theorem]{Corollary}

\theoremstyle{remark}
\newtheorem{remark}[theorem]{Remark}
\theoremstyle{definition}

\newcounter{dummy} \numberwithin{dummy}{section}

\newtheorem{Proposition}[dummy]{Proposition}

\newtheorem{Lemma}[dummy]{Lemma}

\def\1{\mathbbm{1}}

\begin{document}
\title{Non-commutative law of rare events}

\author{Marco Tulio Gaxiola}
\address{Marco Tulio Gaxiola: Ometra, Mexico City.}
\email{tgaxiola@ometra.mx}

\author{Arturo Jaramillo}
\address{Arturo Jaramillo: Department of Probability and Statistics,
Centro de Investigaci\'on en Matem\'aticas (CIMAT)}
\email{jagil@cimat.mx}

\date{\today}
\begin{abstract}
We establish quantitative versions of the law of rare events and binomial approximations in non-commutative probability settings, including the free, Boolean, and monotone convolution frameworks. Our main results provide explicit error bounds in the non-commutative Wasserstein distance for approximations of convolutions of rare countings by non-commutative Poisson and binomial distributions. These bounds extend classical results from the tensor setting to the non-commutative regime. Our approach relies on a discrete Lindeberg-type interpolation scheme combined with algebraic properties of cumulants adapted to each independence notion. The results presented here fill a gap in the literature concerning explicit rates of convergence in non-commutative limit theorems.
\end{abstract}

\subjclass[2020]{60F05, 46L53, 60E05} 
\keywords{Poisson approximation, Lindeberg method, free probability}
\maketitle

\section{Introduction}
Among the most fundamental results in the study of distributional limit theorems is the law of rare events, which establishes conditions ensuring asymptotic Poissonian behavior for sums of independent, not necessarily identically distributed, Bernoulli random variables. This problem, together with quantitative estimates of the associated approximation error, has been central to a substantial body of research on limit theorems in discrete settings.\\

\noindent This work extends the classical law of rare events into the realm of non-commutative probability. Our main contribution is to establish quantitative bounds for the approximation error in Poisson-type limit theorems under free and Boolean convolutions, with the error measured in terms of the non-commutative Wasserstein distance. These results complement the well-developed theory available in the classical (tensor) framework. In addition, the convolution stability argument yields analogous binomial approximation bounds in the tensor and free settings.\\

\noindent The free and Boolean cases require different arguments. The free approximation is based on a discrete Lindeberg-type interpolation, in which the Bernoulli distributions are successively replaced by free Poisson distributions of the corresponding intensities. The Boolean case is treated directly by combining an explicit support estimate with a classical coupling adapted to the two-point structure of the Boolean Poisson distribution. In both cases, elementary cumulant identities provide the moment information needed to obtain the final estimates.\\

\noindent \textit{Historical Overview}\\
The foundations of Poisson approximation theory trace back to the seminal work of Le Cam~\cite{LeCam1960}, who first established a rate of convergence, in total variation distance, for the approximation of the distribution of a sum of \( n \) independent Bernoulli random variables with small success probabilities by a suitably chosen Poisson distribution. The field underwent a significant methodological breakthrough with the introduction of Stein's method~\cite{MR402873}, which reinterprets probability distances as expectations of difference operators applied to test functions and integrated with respect to the target distribution. This perspective opened the door to powerful new tools for quantitative approximation. Subsequent developments further contributed to the theory, including the introduction of models with local dependencies via dependency graphs, as in the work of Arratia, Goldstein, and Gordon~\cite{Arratia1989}. A pedagogical and comprehensive introduction to Stein's method for Poisson approximation is provided in~\cite{ChenGoldsteinShao2011}. The general type of results found in the aforementioned works takes the form:
\begin{align}\label{eq:Lawofrareevents1}
d_{TV} ( \left(\delta_0 (1 - p_{1,n}) + \delta_1 p_{1,n} \right) * \cdots * \left(\delta_0 (1 - p_{n,n}) + \delta_1 p_{n,n} ),\, \nu_n \right)
\leq C \sum_{k=1}^n p_{k,n}^2,
\end{align}
where \( \nu_n \) is the Poisson distribution with intensity \(  p_{1,n}+\cdots+p_{n,n} \), and \( C \) is a universal constant.\\

\noindent Analogues of the classical law of rare events are known in the free and Boolean convolution frameworks. Convergence of rare Bernoulli trials to the corresponding non-commutative Poisson distributions was established in~\cite{MR839105} for the free case and in~\cite{SpeicherWoroudi1997} for the Boolean case. To the best of our knowledge, explicit finite-sample bounds in the Wasserstein distances considered here for the approximation of Bernoulli convolutions by the corresponding free and Boolean Poisson distributions have not previously been established. A more detailed discussion of non-commutative Poisson distributions and their relevant properties is provided in Section~\ref{sec:ncpoissonprelim}. In the Boolean case, we prove a bound of the same order as~\eqref{eq:Lawofrareevents1}, while in the free setting we obtain an estimate in which \(p_{k,n}^2\) is replaced by \(p_{k,n}^{3/2}\). It remains open whether the exponent \(3/2\) in the free case can be replaced by \(2\).\\

\noindent The precise formulation of our main results involves a substantial amount of notation and is therefore deferred to Section~\ref{Seq:noncommutativelawofrareevents}. Theorem~\ref{eq:lawofrareevents} provides quantitative Poisson approximation bounds for tensor, free, and Boolean convolutions of Bernoulli distributions. Corollary \ref{eq:lawofrareeventsgeneral} extends the tensor and free estimates to compactly supported probability measures that are close to Bernoulli distributions. Finally, Proposition~\ref{eq:binomial} provides analogous binomial approximation bounds in the tensor and free settings.\\

\noindent The remainder of the paper is organized as follows. Section~\ref{sec:prelim} introduces the necessary preliminaries and the main tools used throughout. Section~\ref{Seq:noncommutativelawofrareevents} presents the formal statements of our main results. Section~\ref{Proofs} is devoted to the development of the corresponding proofs. Finally, in the appendix we present a technical lemma that is used throughout the proof of the main result.

\section{Preliminaries}\label{sec:prelim}
\subsection{Elements of non-commutative probability}  
In this section, we provide a brief overview of the fundamental notions of independence in non-commutative probability. Intuitively, these concepts involve setting up a suitable structure in the joint law of non-commutative random variables \( X_1, \dots, X_r \), allowing for the computation of expectations of the form  
\[
\E[f_1(X_1)\cdots f_r(X_r)],
\]  
where \( f_1, \dots, f_r \) are polynomials in \( \mathbb{C}[X] \). The key idea is that this expectation can be determined purely from the marginal distributions of \( X_1, \dots, X_r \), without needing to consider their full joint distribution explicitly.\\  

\noindent A major breakthrough in this area was the discovery of free independence, which emerged from the study of quantum probability. This concept was first observed in the works of Hudson and Parthasarathy \cite{MR0745686} and later formalized by Voiculescu \cite{MR0799593}. Over time, efforts to establish a more general framework for independence led to the formulation of another natural notion of independence, Boolean independence \cite{MR1426845}. These notions of independence are formulated within the framework of non-commutative probability spaces, which we define next. Rather than treating random variables as functions on a measure space, we consider them as elements of a unital \( C^{*} \)-algebra, which we denote by \( \mathcal{A} \). This space is assumed to be equipped with a positive unital linear functional \( \tau: \mathcal{A} \to \mathbb{C} \). Any element \( a \in \mathcal{A} \) will be referred to as a non-commutative random variable. We say that \( a \) is self-adjoint if it satisfies \( a = a^* \).\\  

\noindent To describe the joint law of a collection of $d$ non-commutative random variables, we consider the space \( \mathfrak{D}_d \), consisting of linear functionals acting on the set of polynomials \( \mathbb{C}\langle X_1, \dots, X_d\rangle \) in \( d \) non-commutative variables and taking values in \( \mathbb{C} \). Given elements \( a_1, \dots, a_d \in \mathcal{A} \), the corresponding algebraic distribution is defined as the unique element \( \mathfrak{f}_{a_1, \dots, a_d} \in \mathfrak{D}_d \) that satisfies  
\begin{align*}
\mathfrak{f}_{a_1,\dots, a_d}[Q]
  &:=\tau[Q(a_1,\dots, a_d)],
\end{align*}
for every polynomial \( Q \in \mathbb{C}\langle X_1, \dots, X_d\rangle \).  This notion of algebraic distribution for a non-commutative random variable aligns with the classical definition in the case where \( a \) is self-adjoint. Specifically, there exists a unique compactly supported probability measure \( \mu_a \) (referred to as its analytic distribution) that has the same moments as \( a \), meaning that  
\begin{align*}
\int_\R Q(x)\mu_a(dx)
  &:=\mathfrak{f}_a[Q].
\end{align*}

\noindent Just as in classical probability, knowing the individual distributions of two self-adjoint elements \(a,b\in\mathcal{A}\) is not sufficient to determine their joint law. The missing information can be specified through an appropriate notion of independence. In this paper, we consider tensor, Boolean, and free independence, which we recall next.

\begin{definition}[Tensor independence]
A collection of unital subalgebras
\[
\{\mathcal{A}_k\ ;\ 1\leq k\leq r\}
\]
is said to be tensor independent if the algebras commute pairwise and
\[
\tau[a_1\cdots a_r]
=
\prod_{k=1}^r\tau[a_k]
\]
for every choice of elements \(a_k\in\mathcal{A}_k\).
\end{definition}

\begin{definition}[Boolean independence]
A collection of not necessarily unital subalgebras
\[
\{\mathcal{A}_k\ ;\ 1\leq k\leq r\}
\]
is said to be Boolean independent if
\[
\tau[a_1\cdots a_m]
=
\prod_{j=1}^m\tau[a_j]
\]
whenever \(a_j\in\mathcal{A}_{i(j)}\) and
\[
i(j)\neq i(j+1),
\qquad
1\leq j\leq m-1.
\]
\end{definition}

\begin{definition}[Free independence]
A collection of unital subalgebras
\[
\{\mathcal{A}_k\ ;\ 1\leq k\leq r\}
\]
is said to be freely independent if
\[
\tau[a_1\cdots a_m]=0
\]
whenever \(a_j\in\mathcal{A}_{i(j)}\),
\[
\tau[a_j]=0,
\qquad
1\leq j\leq m,
\]
and
\[
i(j)\neq i(j+1),
\qquad
1\leq j\leq m-1.
\]
\end{definition}

\subsection{Non-commutative Kantorovich-Rubenstein distance}\label{Kantorovichdubenstein}
Let $\mathcal{P}_c(\R)$ denote the set of compactly supported probability measures on $\R$. Let $\mathfrak{L}_2$ denote the set of linear functionals
\[
\rho:\mathbb{C}\langle X,Y\rangle\longrightarrow\mathbb{C}
\]
of the form $\rho=\mathfrak{f}_{a,b}$, where $a$ and $b$ are self-adjoint random variables in a tracial \(C^*\)-probability space. For a given element \( f \in \mathbb{C}[X] \), we define the polynomials \( f_X, f_Y, f_{Y-X} \in \mathbb{C}\langle X,Y\rangle \) as follows:  
\[
f_X(X,Y) := f(X), \quad f_Y(X,Y) := f(Y), \quad \text{and} \quad f_{Y-X}(X,Y) := f(Y-X).
\]
For any functional \( \rho \in \mathfrak{L}_2 \), we introduce the functionals \( \rho_X, \rho_Y, \rho_{Y-X} \) mapping \( \mathbb{C}[X] \) to \( \mathbb{C} \) and defined by  
\begin{align*}
\rho_X[f]  :=\rho[f_X],
\ \ \ \ \ \  
\rho_Y[f]  :=\rho[f_Y], 
\ \ \ \ \ \
\rho_{Y-X}[f]  :=\rho[f_{Y-X}].
\end{align*}
Since \(a\), \(b\), and \(b-a\) are bounded and self-adjoint, the functionals \(\rho_X\), \(\rho_Y\), and \(\rho_{Y-X}\) extend by continuous functional calculus to continuous functions on compact intervals containing their respective spectra. Given two probability measures \( \gamma_1, \gamma_2 \in \mathcal{P}_c(\mathbb{R}) \), we define the set of transport plans between them as  
\begin{align*}
\Pi[\gamma_1,\gamma_2]
  &:=\{\rho\in\mathfrak{L}_2\ ;\ \rho_X=\gamma_1\ \ \text{ and }\ \rho_Y=\gamma_2\}.	
\end{align*}

\begin{definition}
For \( p \in [1,\infty) \), the \( p \)-Kantorovich-Rubenstein distance is given by  
\begin{align*}
d_{W^{p}}(\gamma_1,\gamma_2)
  &:=\inf_{\rho\in\Pi[\gamma_1,\gamma_2]}\rho_{Y-X}[r_{p}]^{1/p},	
\end{align*}
where \( r_p(z) := |z|^{p} \).
\end{definition}
This definition is motivated by its classical counterpart in optimal transport, which we will refer to as the tensor Wasserstein distance. For probability measures \(\gamma_1\) and \(\gamma_2\) with finite moments of order \(p\), it is defined by
\begin{align}
\mathbbm{d}_{W^{p}}(\gamma_1,\gamma_2)
  &=\left(\inf_{\pi\in \mathbb{\Pi}[\gamma_1,\gamma_2]}
  \int_{\mathbb{R}^{2}} |x-y|^{p} \, \pi(dx,dy)\right)^{1/p},
\end{align}
where \( \mathbb{\Pi}[\gamma_1,\gamma_2] \) denotes the set of tensor transport plans in \( \mathbb{R}^2 \), meaning the collection of probability measures on \( \mathbb{R}^2 \) whose first and second marginals are \( \gamma_1 \) and \( \gamma_2 \), respectively. Since classical probability spaces can be regarded as a special case of non-commutative ones, this naturally leads to the inequality  
\begin{align}
d_{W^{p}}(\gamma_1,\gamma_2)
  &\leq \mathbbm{d}_{W^{p}}(\gamma_1,\gamma_2).	
\end{align}

For $\circledast\in\{*,\boxplus\}$, the following inequality will play a crucial role in our computations.

\begin{Lemma}\label{Lemma:convolutioninequality}
Let $p\in[1,\infty)$ and
$\circledast\in\{*,\boxplus\}$. Let
$\gamma_1,\rho_1,\dots,\gamma_n,\rho_n
\in\mathcal{P}_c(\mathbb{R})$ be probability measures with
finite moments of order $p$. Then
\begin{align}\label{eq:keyineq1}
d_{W^{p}}
\left(
\gamma_1\circledast\cdots\circledast\gamma_n,
\rho_1\circledast\cdots\circledast\rho_n
\right)
&\leq
\sum_{k=1}^{n}
d_{W^{p}}(\gamma_k,\rho_k).
\end{align}
\end{Lemma}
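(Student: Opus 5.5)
Two ingredients suffice: a triangle inequality for $d_{W^p}$, and a single-swap estimate showing that replacing one factor in a convolution product costs at most the distance between that factor and its replacement. Chaining these with the intermediate measures
\[
\nu_k:=\rho_1\circledast\cdots\circledast\rho_k\circledast\gamma_{k+1}\circledast\cdots\circledast\gamma_n,\qquad 0\le k\le n,
\]
so that $\nu_0=\gamma_1\circledast\cdots\circledast\gamma_n$ and $\nu_n=\rho_1\circledast\cdots\circledast\rho_n$, gives
\[
d_{W^p}(\nu_0,\nu_n)\le\sum_{k=1}^n d_{W^p}(\nu_{k-1},\nu_k)\le\sum_{k=1}^n d_{W^p}(\gamma_k,\rho_k).
\]
This is the bound \eqref{eq:keyineq1}.

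\textbf{Triangle inequality.} For the tensor case $\circledast=*$ it is classical, and one may even work directly with $\mathbbm{d}_{W^p}$ via product couplings. For $d_{W^p}$ it follows by gluing plans. Given near-optimal $\rho=\mathfrak f_{a,b}$ and $\rho'=\mathfrak f_{b',c}$ with $\mu_b=\mu_{b'}$, take the amalgamated free product over the von Neumann algebra $W^*(b)\cong W^*(b')$. This is a tracial $W^*$-probability space containing $a,b,c$ with the prescribed pair laws. Then Minkowski's inequality for $\|\cdot\|_p=\tau(|\cdot|^p)^{1/p}$ in a tracial space gives $\|c-a\|_p\le\|c-b\|_p+\|b-a\|_p$.

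\textbf{Single swap.} Fix $k$ and a near-optimal plan $\mathfrak f_{x,y}$ in $\Pi[\gamma_k,\rho_k]$, with $x,y$ in a tracial space $(\mathcal B,\tau_1)$. Let $s$ and $t$ be self-adjoint elements realizing the two common factors
\[
\rho_1\circledast\cdots\circledast\rho_{k-1}\qquad\text{and}\qquad\gamma_{k+1}\circledast\cdots\circledast\gamma_n.
\]
Form the tensor product (when $\circledast=*$) or the reduced free product (when $\circledast=\boxplus$) of $\mathcal B$ with the algebras generated by $s$ and $t$. Traciality is preserved by both constructions. The pair $W^*(x,y)$, $W^*(s)$, $W^*(t)$ is then tensor, respectively freely, independent, so $x$ is independent of $s,t$ and so is $y$. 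Hence $s+x+t$ has law $\nu_{k-1}$ and $s+y+t$ has law $\nu_k$, which means $\mathfrak f_{s+x+t,\,s+y+t}$ is a transport plan between $\nu_{k-1}$ and $\nu_k$. Its cost is
\[
\tau\bigl(|(s+y+t)-(s+x+t)|^p\bigr)=\tau_1\bigl(|y-x|^p\bigr),
\]
so $d_{W^p}(\nu_{k-1},\nu_k)\le d_{W^p}(\gamma_k,\rho_k)+\varepsilon$, and letting $\varepsilon\to0$ completes the swap estimate.

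\textbf{Main obstacle.} The difficulty is purely the operator-algebraic bookkeeping in the two steps above, not any estimate. In the gluing step, the amalgamated free product must stay tracial with the correct pair distributions, and one should check that the definition of $\Pi$ (plans realized in tracial $C^*$-spaces) allows passing to the von Neumann completion. In the swap step, one must confirm that free independence of the algebra generated by $\{x,y\}$ from $s$ and $t$ yields freeness of $x$ from $s,t$ individually, which holds since subalgebras of free algebras are free. One must also confirm that the reduced free product state is faithful and tracial. These are standard facts from Voiculescu's theory, and I would cite them rather than reprove them.
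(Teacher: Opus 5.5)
The paper gives no proof of this lemma. It is stated in Section~\ref{Kantorovichdubenstein} and then used as a known fact, so there is no argument of the authors to compare yours with. Judged on its own terms, your proof is correct.

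In the swap step, the tensor or reduced free product of $(\mathcal B,\tau_1)$ with $C^*(s)$ and $C^*(t)$ carries a tracial product state. The sums $s+x+t$ and $s+y+t$ have laws $\nu_{k-1}$ and $\nu_k$ by associativity and commutativity of $\circledast$. The cost is exactly $\tau_1(|y-x|^p)$, because the embedding of $\mathcal B$ preserves the trace and commutes with functional calculus. The gluing over $W^*(b)\cong L^\infty(\mu_b)$, with trace-preserving conditional expectations, is the standard tracial amalgamated free product. Minkowski's inequality in noncommutative $L^p$ then gives the triangle inequality.

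One correction. The distance in \eqref{eq:keyineq1} is $d_{W^p}$ for both choices of $\circledast$, so the triangle inequality you need is always the one for $d_{W^p}$. Your aside about handling $\circledast=*$ with $\mathbbm{d}_{W^p}$ would only put $\sum_k\mathbbm{d}_{W^p}(\gamma_k,\rho_k)$ on the right. That sum is a priori larger than $\sum_k d_{W^p}(\gamma_k,\rho_k)$, unless you also invoke the nontrivial fact that the two distances coincide on $\mathcal P_c(\R)$. Your uniform argument, which uses the tensor product in the swap step, avoids this issue.

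Your route does more work than the lemma itself requires. You can drop both the telescoping and the triangle inequality. Choose near-optimal pairs $(x_k,y_k)$ for every $k$ at once, and form the tensor (resp.\ reduced free) product of the $n$ tracial algebras $W^*(x_k,y_k)$. Then $X:=\sum_k x_k$ and $Y:=\sum_k y_k$ have laws $\gamma_1\circledast\cdots\circledast\gamma_n$ and $\rho_1\circledast\cdots\circledast\rho_n$. Minkowski gives $\|Y-X\|_p\le\sum_k\|y_k-x_k\|_p$ in one line, with no amalgamated free products.

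What your version buys in exchange is the triangle inequality for $d_{W^p}$ itself. The paper uses that inequality without proof, in \eqref{eq:boundfWsums} and in the last step of the proof of Corollary~\ref{eq:lawofrareeventsgeneral}. Your gluing argument therefore supplies a fact the paper relies on tacitly.
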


\subsection{Convolutions and cumulants}\label{sec:cumulants}

In this section, we consider the convolutions arising from tensor, free, and Boolean independence. Given two probability measures \( \mu \) and \( \nu \), we can construct a non-commutative probability space \( (\mathcal{A},\tau) \) along with self-adjoint elements \( a \) and \( b \) in \( \mathcal{A} \), whose generated algebras satisfy the corresponding independence condition. Since the sum \( a+b \) remains self-adjoint, it has a well-defined analytic distribution, which we denote by \( \mu_{a+b} \). For $\circledast\in\{*,\boxplus,\uplus\}$, corresponding respectively to tensor, free, and Boolean convolution, we use the notation \( \mu\circledast\nu \) for the distribution \( \mu_{a+b} \). The admissible choices of \(\circledast\) will be specified in each statement.\\

\noindent\textit{Cumulants}\\
For a probability measure $\gamma\in\mathcal{P}(\R)$ with
finite moments, we use the notation
\[
m_r[\gamma]:=\int_{\R}x^r\gamma(dx).
\]
The cumulants associated with the different notions of independence are defined through the corresponding moment-cumulant formulas.\\

\noindent Throughout this section, we write $[r]:=\{1,\dots,r\}$. Let $\mathcal{P}(r)$ denote the set of all partitions of $[r]$, let $NC(r)$ denote the set of non-crossing partitions of $[r]$, and let $I(r)$ denote the set of interval partitions of $[r]$.

\begin{definition}
Let $\gamma\in\mathcal{P}(\R)$ have moments of all orders.
For $\circledast\in\{*,\boxplus,\uplus\}$, the
$\circledast$-cumulants of $\gamma$ are the unique sequence
$\{\kappa_r^{\circledast}[\gamma]\ ;\ r\geq1\}$ satisfying
\[
m_r[\gamma]
=
\sum_{\mathfrak{p}\in\mathcal{P}_{\circledast}(r)}
\prod_{V\in\mathfrak{p}}
\kappa_{|V|}^{\circledast}[\gamma],
\]
where
\[
\mathcal{P}_{*}(r)=\mathcal{P}(r),\qquad
\mathcal{P}_{\boxplus}(r)=NC(r),\qquad
\mathcal{P}_{\uplus}(r)=I(r).
\]
\end{definition}

\noindent These formulas define the cumulants recursively in terms of the moments. The classical formula is the usual moment-cumulant formula, while the free and Boolean formulas may be found in \cite[Section 2.2]{ArizmendiCelestino2022}.\\

\noindent For $y\in\R$, let $\mathfrak{D}_y:\mathcal{P}(\R)\rightarrow\mathcal{P}(\R)$ be the dilation operator defined by
\[
\int_{\R}f(x)\mathfrak{D}_y[\gamma](dx)
=
\int_{\R}f(yx)\gamma(dx)
\]
for every bounded continuous function $f$.\\

\noindent The properties of cumulants that will be used in this paper
are collected in the following lemma.

\begin{lemma}\label{lem:cumulantproperties}
Let $\gamma,\rho\in\mathcal{P}(\R)$ have moments of all
orders.

\begin{enumerate}
\item For every $\circledast\in\{*,\boxplus,\uplus\}$ and $r\geq1,$
\[
\kappa_r^{\circledast}[\gamma\circledast\rho]
=
\kappa_r^{\circledast}[\gamma]
+
\kappa_r^{\circledast}[\rho],
\]

\item For every $\circledast\in\{*,\boxplus,\uplus\}$ and
$y\in\R$,
\[
\kappa_r^{\circledast}[\mathfrak{D}_y[\gamma]]
=
y^r\kappa_r^{\circledast}[\gamma].
\]
\end{enumerate}
\end{lemma}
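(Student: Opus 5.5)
The plan is to treat the two items separately. In each case the cumulants are characterized as the unique sequence solving the relevant moment-cumulant system, so it suffices to exhibit a candidate sequence and check that it satisfies that system.

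\textbf{Dilation (item 2).} This is the easier part. Every moment scales by $y^r$:
\[
m_r[\mathfrak{D}_y[\gamma]]=\int_{\R}(yx)^r\,\gamma(dx)=y^r m_r[\gamma].
\]
Now take a partition $\mathfrak{p}\in\mathcal{P}_{\circledast}(r)$. The block sizes satisfy $\sum_{V\in\mathfrak{p}}|V|=r$, so
\[
\prod_{V\in\mathfrak{p}} y^{|V|}\kappa^{\circledast}_{|V|}[\gamma]=y^r\prod_{V\in\mathfrak{p}}\kappa^{\circledast}_{|V|}[\gamma].
\]
Summing over $\mathfrak{p}$ shows that the sequence $y^r\kappa_r^{\circledast}[\gamma]$ solves the moment-cumulant system for $\mathfrak{D}_y[\gamma]$. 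The moment-cumulant formula is triangular: the one-block partition contributes $\kappa_r$ and every other term involves only lower-order cumulants. Hence the solution is unique, and item 2 follows.

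\textbf{Additivity (item 1).} The plan is to realize the convolution with self-adjoint $a,b\in(\mathcal{A},\tau)$ satisfying the relevant independence, so that $m_r[\gamma\circledast\rho]=\tau[(a+b)^r]$. Introduce multivariate cumulants $\kappa^{\circledast}_r(c_1,\dots,c_r)$, defined by the multilinear moment-cumulant formula
\[
\tau[c_1\cdots c_r]=\sum_{\mathfrak{p}\in\mathcal{P}_{\circledast}(r)}\prod_{V\in\mathfrak{p}}\kappa^{\circledast}_{|V|}\bigl((c_i)_{i\in V}\bigr),
\]
and invert it by Möbius inversion on the appropriate lattice. These cumulants are multilinear, so
\[
\kappa_r^{\circledast}(a+b,\dots,a+b)=\sum_{\epsilon\in\{a,b\}^r}\kappa_r^{\circledast}(\epsilon_1,\dots,\epsilon_r).
\]
Additivity then reduces to the \emph{vanishing of mixed cumulants}: every term in which both $a$ and $b$ appear must be zero. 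Only $\kappa_r(a,\dots,a)=\kappa_r^{\circledast}[\gamma]$ and $\kappa_r(b,\dots,b)=\kappa_r^{\circledast}[\rho]$ survive.

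This vanishing is the main obstacle, and each case is handled differently.
\begin{itemize}
\item \emph{Tensor case.} The mixed cumulants vanish by the classical argument: independence factorizes the moment generating function, so its logarithm splits as a sum.
\item \emph{Boolean case.} I would argue by induction on $r$. Given a mixed word, group it into maximal consecutive runs of $a$'s and $b$'s. Boolean independence factorizes $\tau$ of the word over these runs, which is exactly the interval-partition structure. Comparing the moment-cumulant expansions of both sides, and using the inductive hypothesis on shorter words, forces the full mixed cumulant to be zero.
\item \emph{Free case.} This is Speicher's theorem that mixed free cumulants vanish. I would cite it from the standard reference the paper already uses for the free cumulant formulas, namely \cite[Section 2.2]{ArizmendiCelestino2022}, rather than reprove it. Its proof centers $a$ and $b$, uses that alternating products of centered free elements have zero expectation, and inducts via non-crossing partitions.
\end{itemize}

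In every case the univariate cumulants are recovered from the multivariate ones as diagonal values, so combining the multilinear expansion with the vanishing of mixed cumulants gives item 1.
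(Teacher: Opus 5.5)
Your proposal is correct, and it does more than the paper does. The paper gives no real argument: it calls the tensor case standard and says the free and Boolean statements ``follow directly from the corresponding moment-cumulant formulas,'' citing \cite[Section 2.2]{ArizmendiCelestino2022}.

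\begin{itemize}
\item \textbf{Dilation.} Your argument is exactly what ``follows directly'' means here: each term $\prod_{V}\kappa_{|V|}$ is homogeneous of degree $r$, and the triangular system has a unique solution.
\item \textbf{Additivity.} You are more explicit than the paper. You correctly note that the univariate formulas alone do not give additivity; the real input is multilinearity of the multivariate cumulants together with vanishing of mixed cumulants.
\item \textbf{Boolean case.} Your run-decomposition induction is sound. The interval partitions whose blocks are all pure are exactly those refining the run partition $\sigma$, and $1_r\not\leq\sigma$ because $\sigma$ has at least two blocks. This forces the mixed cumulant to vanish.
\item \textbf{Tensor case.} The same induction works verbatim with $\mathcal{P}(r)$ in place of $I(r)$, using the factorization $\tau[c_1\cdots c_r]=\tau[a^k]\tau[b^l]$ for commuting tensor-independent $a,b$. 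This avoids the moment generating function, which need not exist for a measure that merely has all moments. If you keep the MGF argument, phrase it with formal power series.
\item \textbf{Free case.} Only this case is outsourced to Speicher's theorem, just as the paper outsources it.
\end{itemize}

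One caveat applies equally to the paper. For measures that are not compactly supported, $a,b$ must live in an algebraic $*$-probability space rather than a $C^*$-algebra. You then also need that the moments of the analytically defined $\gamma\circledast\rho$ agree with the algebraic ones. The paper only ever applies the lemma to compactly supported measures, so this is harmless.
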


\begin{proof}
The classical statements are standard. The free and Boolean
statements follow directly from the corresponding
moment-cumulant formulas; see \cite[Section 2.2]{ArizmendiCelestino2022}.
\end{proof}
The first two moments can be written in terms of cumulants in the following form, regardless of the choice of $\circledast$:
\[
m_1=\kappa_1^{\circledast},
\qquad
m_2
=
\left(\kappa_1^{\circledast}\right)^2
+
\kappa_2^{\circledast}.
\]
For the third moment in the Boolean and free cases, we have
\begin{align}\label{eq:momentoforderthreetocumulants}
m_3
&=
\left(\kappa_1^{\uplus}\right)^3
+
2\kappa_2^{\uplus}\kappa_1^{\uplus}
+
\kappa_3^{\uplus}
\\
&=
\left(\kappa_1^{\boxplus}\right)^3
+
3\kappa_2^{\boxplus}\kappa_1^{\boxplus}
+
\kappa_3^{\boxplus}.
\nonumber
\end{align}
These identities follow directly from general moment-cumulant formulas; see \cite[Section 2.2]{ArizmendiCelestino2022}.

\subsection{Non-commutative Poisson distribution}\label{sec:ncpoissonprelim}
In this section we introduce the notion of non-commutative Poisson distribution. 

\begin{definition}
We say that a probability measure
$\mu\in\mathcal{P}(\R)$ with moments of all orders has a $\circledast$-Poisson distribution with intensity $\lambda\geq0$ if
\begin{align*}
\kappa_r^{\circledast}[\mu]
  &=\lambda
\end{align*}
for every $r\in\N$. We denote this probability measure by
$\Pi_{\lambda}^{\circledast}$. In particular, we set
\[
\Pi_0^{\circledast}:=\delta_0.
\]
\end{definition}

For every $\circledast\in\{*,\boxplus,\uplus\}$, the additivity of cumulants implies that
\[
\Pi_{\lambda_1}^{\circledast}
\circledast
\Pi_{\lambda_2}^{\circledast}
=
\Pi_{\lambda_1+\lambda_2}^{\circledast}.
\]
The $\circledast$-Poisson distribution is crucial in the study of non-commutative convolutions, due to the law of rare events, presented in the next theorem  
\begin{theorem}
Let $\circledast\in\{*,\boxplus,\uplus\}$ and let $\lambda>0$. For every integer $n>\lambda$, define the Bernoulli probability measure
\[
\nu_n
=
\left(1-\frac{\lambda}{n}\right)\delta_0
+
\frac{\lambda}{n}\delta_1.
\]
Then $\nu_n^{\circledast n}$  converges weakly to the $\circledast$-Poisson distribution
$\Pi_{\lambda}^{\circledast}$.
\end{theorem}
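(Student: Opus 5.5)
The plan is the method of moments, carried out through the cumulants of Lemma~\ref{lem:cumulantproperties}. For $\circledast\in\{*,\boxplus,\uplus\}$, additivity of cumulants (part (1) of Lemma~\ref{lem:cumulantproperties}) gives
\[
\kappa_r^{\circledast}[\nu_n^{\circledast n}] = n\,\kappa_r^{\circledast}[\nu_n]\qquad\text{for every } r\geq 1.
\]
The task is therefore to show that $n\kappa_r^{\circledast}[\nu_n]\to\lambda$ for each fixed $r$, to convert this into convergence of moments, and finally to pass from moments to weak convergence.

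For the cumulant asymptotics, write $p=\lambda/n$. Since $x^r=x$ on $\{0,1\}$, we have $m_r[\nu_n]=p$ for every $r\geq1$. Invert the defining moment-cumulant formula by Möbius inversion over the relevant lattice $\mathcal{P}_\circledast(r)$:
\[
\kappa_r^{\circledast}[\nu_n]=\sum_{\mathfrak{p}\in\mathcal{P}_\circledast(r)}\mu_\circledast(\mathfrak{p},1_r)\prod_{V\in\mathfrak{p}} m_{|V|}[\nu_n]=\sum_{\mathfrak{p}}\mu_\circledast(\mathfrak{p},1_r)\,p^{|\mathfrak{p}|}.
\]
The one-block partition $1_r$ contributes exactly $p$. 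Every other term is $O(p^2)$, with a constant depending only on $r$, because the lattices are finite. Hence $\kappa_r^{\circledast}[\nu_n]=p+O_r(p^2)$, and so
\[
n\kappa_r^{\circledast}[\nu_n]=\lambda+O_r(\lambda^2/n)\longrightarrow\lambda.
\]
This identifies the limiting cumulants with those of $\Pi^{\circledast}_\lambda$. Plugging them back into the (polynomial, finite) moment-cumulant formula shows that $m_r[\nu_n^{\circledast n}]\to m_r[\Pi^\circledast_\lambda]$ for every $r$.

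The main obstacle is the step from moments to weak convergence. For this I would check that $\Pi^\circledast_\lambda$ is determined by its moments. In the free and Boolean cases it is compactly supported: $\Pi^\boxplus_\lambda$ is the Marchenko–Pastur law on $[(1-\sqrt\lambda)^2,(1+\sqrt\lambda)^2]$ (with an atom at $0$ if $\lambda<1$), and $\Pi^\uplus_\lambda$ is a two-point measure. Compact support can also be seen directly from the cumulant bound $|\kappa_r|=\lambda$: the moments then satisfy $m_r\leq C^r$, using $|NC(r)|,|I(r)|\leq 4^r$. In the classical case the Poisson law has moment generating function $\exp(\lambda(e^t-1))$, so it is determined by its moments. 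In all three cases the moment-determinacy of the limit, together with convergence of all moments (which also gives tightness through the bounded second moments), yields weak convergence by the standard method-of-moments theorem.

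One should also confirm that the measures $\nu_n^{\circledast n}$ genuinely exist as probability measures. This follows from the construction in Section~\ref{sec:cumulants}, where each convolution is realized as the distribution of a sum of suitably independent self-adjoint elements.
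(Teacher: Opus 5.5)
Your argument is correct, but it does not follow the paper, because the paper gives no proof of this statement. It quotes the result and attributes the tensor, free and Boolean cases to the cited literature. What the paper proves is the quantitative Theorem~\ref{eq:lawofrareevents}. At $p_k=\lambda/n$ it yields $\mathbbm{d}_{W^1}(\nu_n^{*n},\Pi^{*}_{\lambda})\leq \lambda^2/n$, $d_{W^1}(\nu_n^{\uplus n},\Pi^{\uplus}_{\lambda})\leq 2\lambda^2/(n(1+\lambda))$ and $d_{W^1}(\nu_n^{\boxplus n},\Pi^{\boxplus}_{\lambda})\leq 3\lambda^{3/2}n^{-1/2}$. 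Each case there has its own mechanism: convolution stability of the classical distance (tensor); a Lindeberg replacement of each Bernoulli by $\Pi^{\boxplus}_{p_k}$, combined with Lemma~\ref{Lemma:convolutioninequality} and a third-moment cumulant computation (free); and an explicit coupling with the two-point law $\Pi^{\uplus}_\lambda$, resting on the support bound of Lemma~\ref{lem:boolean-bernoulli-support} (Boolean).

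Your cumulant and method-of-moments proof is uniform in $\circledast$. It needs only additivity of cumulants, the finite moment--cumulant formula and moment-determinacy of the limit, and it gives weak convergence directly. The price is that it gives no rate in any transport distance, only $n\kappa_r^{\circledast}[\nu_n]-\lambda=O_r(1/n)$ for each fixed $r$.

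The paper's route gives explicit rates. However, in the free case the bound is on the non-commutative $d_{W^1}$, so getting weak convergence from it needs one further observation that the paper does not state. For example, $|\tau(e^{ita})-\tau(e^{itb})|\leq |t|\,\tau(|a-b|)$ follows from Duhamel's formula and traciality, and L\'evy's continuity theorem then finishes. In the tensor and Boolean cases the proofs already bound the classical $\mathbbm{d}_{W^1}$, so no extra step is needed there.
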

In the tensor case, a quantitative version of the law of
rare events was established in \cite{MR428387}. The
classical Poisson distribution is supported on
$\N_0:=\N\cup\{0\}$ and satisfies
\[
\Pi_\lambda^*[\{k\}]
=
e^{-\lambda}\frac{\lambda^k}{k!},
\qquad
k\in\N_0.
\]
In the free case, the problem was addressed in
\cite{MR839105}. The distribution
$\Pi_\lambda^{\boxplus}$ has an atom at zero of size
$(1-\lambda)_+$ and an absolutely continuous part supported
on
\[
[(1-\sqrt{\lambda})^2,(1+\sqrt{\lambda})^2],
\]
with density
\[
\frac{1}{2\pi x}
\sqrt{
\big((1+\sqrt{\lambda})^2-x\big)
\big(x-(1-\sqrt{\lambda})^2\big)
}.
\]
The Boolean case was addressed in
\cite{MR1426845}. The Boolean Poisson distribution with
intensity $\lambda>0$ is the two-point measure
\[
\Pi_\lambda^\uplus
=
\frac{1}{1+\lambda}\delta_0
+
\frac{\lambda}{1+\lambda}\delta_{\lambda+1}.
\]

\section{Main results}\label{Seq:noncommutativelawofrareevents}

In this section, we present our main results concerning the law of rare events and binomial approximations in the non-commutative setting. Throughout, we denote by \( \bm{\mu} \) a tuple of probability measures in \( \mathcal{P}(\mathbb{R}) \), written as \( \bm{\mu} = (\mu_1, \dots, \mu_n) \), where each \( \mu_i \in \mathcal{P}(\mathbb{R}) \). For notational convenience, we introduce the shorthand
\begin{align}
\bm{\mu}^{\circledast} := \mu_1 \circledast \cdots \circledast \mu_n,
\end{align}
which will prove particularly useful when dealing with binomial approximations.  The remainder of this section is divided into two parts: the first focuses on results related to non-commutative Poisson approximations (namely, the non-commutative law of rare events), and the second is devoted to non-commutative binomial approximation results. 

\subsection{Non-commutative law of rare events}
We begin this section with the quantitative law of rare events in its simplest form, stated formally below

\begin{theorem}\label{eq:lawofrareevents}
Consider Bernoulli probability measures
$\mu_1,\dots,\mu_n$ with success probabilities
$p_1,\dots,p_n$, and define
\[
\lambda:=\sum_{k=1}^n p_k.
\]

For the tensor convolution,
\begin{align}
\mathbbm{d}_{W^1}
\left(
\bm{\mu}^{*},
\Pi_{\lambda}^{*}
\right)
&\leq
\sum_{k=1}^n p_k^2.
\end{align}

For the Boolean convolution,
\begin{align}
d_{W^1}
\left(
\bm{\mu}^{\uplus},
\Pi_{\lambda}^{\uplus}
\right)
&\leq
\frac{2}{1+\lambda}
\sum_{k=1}^n p_k^2
\leq
2\sum_{k=1}^n p_k^2.
\end{align}

For the free convolution,
\begin{align}
d_{W^1}
\left(
\bm{\mu}^{\boxplus},
\Pi_{\lambda}^{\boxplus}
\right)
&\leq
3\sum_{k=1}^n p_k^{3/2}.
\end{align}
\end{theorem}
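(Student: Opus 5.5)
\emph{Tensor and free cases.} For $\circledast\in\{*,\boxplus\}$ the plan is to reduce everything to a single summand via Lemma~\ref{Lemma:convolutioninequality}. Since $\Pi^{\circledast}_{p_1}\circledast\cdots\circledast\Pi^{\circledast}_{p_n}=\Pi^{\circledast}_{\lambda}$ by additivity of cumulants, it suffices to bound $d(\mu_k,\Pi^{\circledast}_{p_k})$ for each $k$. For the tensor case we use the classical version of the convolution inequality (couple the summands independently), together with the explicit coupling $X=\1\{N\geq 1\}$ where $N\sim\Pi^*_p$. This gives
\[
\mathbbm{d}_{W^1}(\mu,\Pi_p^*)\leq \E[(N-1)_+]=p-1+e^{-p}\leq p^2/2 .
\]
For the free case we bound $d_{W^1}(\mu,\Pi^{\boxplus}_p)\leq \mathbbm{d}_{W^1}(\mu,\Pi^{\boxplus}_p)$. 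Since $p\leq 1$, $\Pi_p^{\boxplus}$ has an atom $1-p$ at $0$, matching the atom of $\mu$ at $0$, and the remaining mass $p$ is spread over $[(1-\sqrt p)^2,(1+\sqrt p)^2]$. We couple the two atoms at $0$ and send the atom of $\mu$ at $1$ to the continuous part. Let $Y$ denote a variable distributed as the normalized continuous part. Because $m_1[\Pi_p^{\boxplus}]=\kappa_1=p$ and $m_2=\kappa_1^2+\kappa_2=p+p^2$, we get $\E[Y]=1$ and $\E[Y^2]=1+p$, so $\mathrm{Var}(Y)=p$. Cauchy--Schwarz then gives a cost of $p\,\E|Y-1|\leq p^{3/2}$. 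Summing over $k$ yields $\sum_k p_k^{3/2}$, which is comfortably within the constant $3$.

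\emph{Boolean case.} Lemma~\ref{Lemma:convolutioninequality} is not available for $\uplus$, so here we compute $\bm\mu^{\uplus}$ explicitly.

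First we identify the atoms. Via reciprocal Cauchy transforms, the self-energies $z-F$ add under $\uplus$. For a Bernoulli law, $z-F_{\mu_k}(z)=p_kz/(z-1+p_k)$. Hence
\[
F_{\bm\mu^{\uplus}}(z)=z\Big(1-\sum_k \frac{p_k}{z-1+p_k}\Big).
\]
So $\bm\mu^{\uplus}$ is atomic, supported on $0$ and the zeros of $g(z)=1-\sum_k p_k/(z-1+p_k)$. The function $g$ is increasing between consecutive poles $1-p_k$. This gives the support estimate: there is one zero $r>1$, and all other zeros lie in $[1-\max p_k,1]$. Evaluating $g(1+\lambda)\geq 0$ and using $1/(\lambda+p_k)\leq 1/\lambda$ localizes $r\in(1,1+\lambda]$, with $1+\lambda-r$ controlled by $\sum p_k^2/(1+\lambda)$.

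Next we compute the masses. The mass at an atom $z_0$ is $1/F'(z_0)$. In particular the atom at $0$ has mass $1/g(0)$, to be compared with $1/(1+\lambda)$.

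Finally we build the coupling. We move the small atoms near $1$ and the atom at $r$ onto the two points $0$ and $\lambda+1$ of $\Pi^{\uplus}_\lambda$. The first moments agree (both equal $\lambda$). We then bound the transport cost by the total mass on the small atoms times their distance to the target, plus the discrepancy between $r$ and $\lambda+1$.

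\emph{Main obstacle.} The hardest step is this last estimate: showing that the combined mass of the zeros in $[1-\max p_k,1]$, together with $|r-(1+\lambda)|$, is at most $\frac{2}{1+\lambda}\sum p_k^2$. I would first try to handle it by expanding $g$ around $z=1$ and $z=1+\lambda$ to second order in the $p_k$. I would then use the second-moment identity $m_2=\kappa_1^2+\kappa_2^{\uplus}$ to convert the mass computation into a statement about moments.
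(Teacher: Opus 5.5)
\textbf{The Boolean case has a genuine gap; the free case is correct, and the tensor case has a fixable error.}

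\textbf{Free case.} Your argument is correct and sharper than the paper's. The paper bounds $\mathbbm{d}_{W^1}(\mu_k,\Pi^{\boxplus}_{p_k})$ using Kantorovich--Rubinstein duality, the pointwise estimate $|f(x)-xf(1)|\le 2x|x-1|$, and Cauchy--Schwarz against the third moment, obtaining $2p_k^{3/2}(1+p_k)^{1/2}$. Your coupling uses that the atom of $\Pi^{\boxplus}_{p}$ at $0$ is exactly $1-p$ for $p\le 1$, and gives $p_k^{3/2}$.

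\textbf{Tensor case.} The step is wrong as written. $\1\{N\geq1\}$ is Bernoulli with parameter $1-e^{-p}\neq p$, so $(\1\{N\geq 1\},N)$ is not a coupling of $\mu$ and $\Pi^*_p$. The actual distance, from the distribution-function formula, is $2(p-1+e^{-p})$, twice your value. It is still at most $p^2$, so the stated inequality survives, but your bound $p^2/2$ does not.

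\textbf{Boolean case.} You leave this case unfinished, and the reduction you set up cannot be completed as formulated.
\begin{itemize}
\item Take $n=1$. Then $\bm\mu^{\uplus}=\mu_1$, $r=1$, and there are no small atoms. Hence $|r-(1+\lambda)|=p>\frac{2p^2}{1+p}$ for $p\in(0,1)$, so your stated target inequality fails. Likewise $1+\lambda-r$ is not controlled by $\sum_k p_k^2/(1+\lambda)$.
\item Your bookkeeping omits the atom at $0$. It has mass $1/g(0)=\bigl(1+\sum_k p_k/(1-p_k)\bigr)^{-1}$, which is smaller than $1/(1+\lambda)$ as soon as some $p_k\in(0,1)$. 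So part of the mass near $1$ and at $r$ must also be transported to $0$. For $n=1$ this accounts for exactly half of the true distance $\frac{2p^2}{1+p}$.
\item Second-order expansions of $g$ in the $p_k$ cannot yield the exact non-asymptotic constant $2/(1+\lambda)$ when the $p_k$ are not small.
\end{itemize}

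The missing idea is that you need neither the atoms nor their masses. You already have $\operatorname{supp}(\bm\mu^{\uplus})\subseteq[0,1+\lambda]$. Put $L:=1+\lambda$, let $X\sim\bm\mu^{\uplus}$, let $U$ be uniform on $[0,1]$ independent of $X$, and set $Y:=L\,\1\{U\le X/L\}$. Since $\E[X]=\lambda$, you get $\Pb[Y=L]=\lambda/L$, so $Y\sim\Pi^{\uplus}_\lambda$. Moreover
\[
\E|X-Y|=\frac{2}{L}\,\E\bigl[X(L-X)\bigr]=\frac{2}{L}\bigl(L\,m_1-m_2\bigr).
\]
By $\kappa_1^{\uplus}[\mu_k]=p_k$, $\kappa_2^{\uplus}[\mu_k]=p_k-p_k^2$ and additivity of Boolean cumulants, $m_1=\lambda$ and $m_2=\lambda^2+\lambda-\sum_k p_k^2$. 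Hence $\E|X-Y|=\frac{2}{1+\lambda}\sum_k p_k^2$, and $d_{W^1}\le\mathbbm{d}_{W^1}$ concludes.

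This is exactly the paper's argument. Splitting each point onto $\{0,L\}$ while preserving the mean makes the transport cost a quadratic function of $X$, so two moments suffice.
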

\begin{remark}
We do not claim that these rates are optimal. In
particular, it remains open whether the exponent $3/2$ in
the free case can be replaced by $2$.
\end{remark}

\noindent In the tensor and free cases, the Bernoulli assumption can be relaxed at the cost of an additional term measuring the first-moment contribution outside the set \(\{0,1\}\).

\begin{corollary}\label{eq:lawofrareeventsgeneral}
Let
\[
\mu_1,\dots,\mu_n\in\mathcal{P}_c(\R),
\]
and define
\[
p_k:=\mu_k[\{1\}],
\qquad
\lambda:=\sum_{k=1}^n p_k.
\]

For the tensor convolution,
\begin{align}
\mathbbm{d}_{W^1}
\left(
\bm{\mu}^{*},
\Pi_{\lambda}^{*}
\right)
&\leq
\sum_{k=1}^n p_k^2
+
\sum_{k=1}^{n}
\int_{\R\setminus\{0,1\}}|x|\,\mu_k(dx).
\end{align}

For the free convolution,
\begin{align}
d_{W^1}
\left(
\bm{\mu}^{\boxplus},
\Pi_{\lambda}^{\boxplus}
\right)
&\leq
3\sum_{k=1}^n p_k^{3/2}
+
\sum_{k=1}^{n}
\int_{\R\setminus\{0,1\}}|x|\,\mu_k(dx).
\end{align}
\end{corollary}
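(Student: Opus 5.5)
The plan is to reduce Corollary~\ref{eq:lawofrareeventsgeneral} to Theorem~\ref{eq:lawofrareevents} by comparing each $\mu_k$ with the Bernoulli measure $\nu_k:=(1-p_k)\delta_0+p_k\delta_1$, which has the same mass at $1$. The comparison will be made through the triangle inequality together with the convolution inequality of Lemma~\ref{Lemma:convolutioninequality}. For $\circledast\in\{*,\boxplus\}$ we would write
\[
d(\bm{\mu}^{\circledast},\Pi^{\circledast}_\lambda)\leq d(\bm{\mu}^{\circledast},\bm{\nu}^{\circledast})+d(\bm{\nu}^{\circledast},\Pi^{\circledast}_\lambda),
\]
where $d$ is $\mathbbm{d}_{W^1}$ in the tensor case and $d_{W^1}$ in the free case. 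Since $\nu_1,\dots,\nu_n$ are Bernoulli with success probabilities $p_1,\dots,p_n$ and $\lambda=\sum_k p_k$, Theorem~\ref{eq:lawofrareevents} bounds the second term by $\sum_k p_k^2$ in the tensor case and by $3\sum_k p_k^{3/2}$ in the free case.

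For the first term we apply Lemma~\ref{Lemma:convolutioninequality} with $p=1$. In the free case this gives $d_{W^1}(\bm{\mu}^{\boxplus},\bm{\nu}^{\boxplus})\leq\sum_k d_{W^1}(\mu_k,\nu_k)$. In the tensor case the same subadditivity holds for the classical distance $\mathbbm{d}_{W^1}$ by the standard argument: take optimal couplings independently for each $k$ and add them. The compact support hypothesis guarantees finite first moments, so both inequalities apply.

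It remains to prove that, in both settings,
\[
\mathbbm{d}_{W^1}(\mu_k,\nu_k)\leq\int_{\R\setminus\{0,1\}}|x|\,\mu_k(dx).
\]
Since $d_{W^1}\leq\mathbbm{d}_{W^1}$, a classical coupling suffices for the free case as well. We construct it explicitly. Let $X\sim\mu_k$ and set $Y:=\1_{\{X=1\}}$. Then $Y$ is Bernoulli with parameter $\mu_k[\{1\}]=p_k$, so $Y\sim\nu_k$. Moreover $X-Y=0$ on the event $\{X\in\{0,1\}\}$, and $X-Y=X$ on its complement. Hence
\[
\E|X-Y|=\int_{\R\setminus\{0,1\}}|x|\,\mu_k(dx).
\]
Combining the three steps yields both displayed bounds.

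The only delicate point is the choice of comparison measure. A naive choice would move the mass lying outside $\{0,1\}$ to $1$, which changes the success probability and hence $\lambda$. Choosing $\nu_k$ with success probability exactly $\mu_k[\{1\}]$ and sending all mass off $\{0,1\}$ to $0$ keeps the intensity equal to $\lambda$. It also makes the transport cost precisely $\int_{\R\setminus\{0,1\}}|x|\,d\mu_k$. The Boolean case is omitted from the statement because Lemma~\ref{Lemma:convolutioninequality} is available only for $*$ and $\boxplus$.
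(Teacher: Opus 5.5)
Your proposal is correct and follows essentially the same route as the paper. Your coupling $Y=\1_{\{X=1\}}$ is exactly the paper's pushforward under the map $T$, and you combine it with the convolution stability (the classical one for $*$, and Lemma~\ref{Lemma:convolutioninequality} together with $d_{W^1}\leq\mathbbm{d}_{W^1}$ for $\boxplus$), Theorem~\ref{eq:lawofrareevents} for the Bernoulli comparison measures, and the triangle inequality, just as the paper does.
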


\subsection{Non-commutative binomial approximations and arithmetic averages}
Throughout this subsection, $\circledast\in\{*,\boxplus\}$. We now deal with the approximations discussed in the previous section, but when the target distribution is binomial instead of Poisson. Once more, we assume that the measures \( \mu_i \) correspond to Bernoulli distributions with success probabilities \( p_1, \dots, p_n \). For $q\in[0,1]$ and $n\geq1$, let
\[
\rho_q:=(1-q)\delta_0+q\delta_1.
\]
We define the $\circledast$-binomial distribution with
success probability $q$ and sample size $n$ by
\[
\nu_B^{q,n}:=\rho_q^{\circledast n}.
\]
where $\rho_i(dx) =(1-q)\delta_{0}+q\delta_1$, for $q$ defined as the mean of the $p_i$'s. A robust formulation of approximations to $\nu_B^{q,n}$ in the tensorial case can be consulted in \cite{Ehm1991}, where binomial approximations to Poisson-Binomial distributions are addressed by means of Stein's method.\\

The following proposition is a direct consequence of Lemma~\ref{forthbefbinomprop}.

\begin{Proposition}\label{eq:binomial}
Let
\[
q:=\frac{1}{n}\sum_{j=1}^{n}p_j,
\]
and let $\nu_B^{q,n}$ be the corresponding
$\circledast$-binomial distribution. Then
\begin{align}\label{eq:binomapprox}
d_{W^1}
\left(
\bm{\mu}^{\circledast},
\nu_B^{q,n}
\right)
&\leq
\sum_{k=1}^{n}|p_k-q|.
\end{align}
\end{Proposition}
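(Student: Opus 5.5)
The plan is to reduce Proposition~\ref{eq:binomial} to a one-step comparison of Bernoulli measures via Lemma~\ref{Lemma:convolutioninequality}. By definition,
\[
\nu_B^{q,n}=\rho_q\circledast\cdots\circledast\rho_q
\qquad\text{($n$ factors)}
\qquad\text{and}\qquad
\bm{\mu}^{\circledast}=\mu_1\circledast\cdots\circledast\mu_n,
\]
where $\mu_k=\rho_{p_k}$. Since $\circledast\in\{*,\boxplus\}$ and all measures involved are compactly supported, Lemma~\ref{Lemma:convolutioninequality} with $p=1$, $\gamma_k=\mu_k$ and $\rho_k=\rho_q$ gives
\[
d_{W^1}\left(\bm{\mu}^{\circledast},\nu_B^{q,n}\right)
\leq
\sum_{k=1}^n d_{W^1}(\rho_{p_k},\rho_q).
\]
It therefore suffices to show that $d_{W^1}(\rho_{p},\rho_q)\leq|p-q|$ for all $p,q\in[0,1]$. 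This single-factor estimate is presumably the content of Lemma~\ref{forthbefbinomprop}.

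For this two-point estimate I would bound the non-commutative distance by the tensor one, using the inequality $d_{W^1}\leq\mathbbm{d}_{W^1}$ recorded in Section~\ref{Kantorovichdubenstein}. I would then exhibit an explicit classical coupling. Assume without loss of generality that $p\leq q$, let $U$ be uniform on $[0,1]$, and set
\[
X=\1_{\{U\leq p\}},\qquad Y=\1_{\{U\leq q\}}.
\]
Then $X\sim\rho_p$ and $Y\sim\rho_q$, and $|Y-X|=\1_{\{p<U\leq q\}}$, so $\E|Y-X|=q-p$. Hence
\[
d_{W^1}(\rho_p,\rho_q)\leq\mathbbm{d}_{W^1}(\rho_p,\rho_q)\leq|p-q|.
\]
Realized as commuting self-adjoint elements of $L^\infty[0,1]$ with trace given by integration, this coupling is a legitimate element of $\Pi[\rho_p,\rho_q]$ in a tracial $C^*$-probability space, which is exactly what the comparison inequality uses.

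Combining the two steps yields \eqref{eq:binomapprox}. The only genuine input is Lemma~\ref{Lemma:convolutioninequality}, the stability of $d_{W^1}$ under free and tensor convolution; the main conceptual obstacle, that transport plans for the individual summands can be assembled into a joint plan for the convolutions, is already handled there. Boolean convolution is excluded precisely because this stability lemma is not available for $\uplus$. One more remark: the choice $q=\frac1n\sum_j p_j$ is not used in the bound itself, which holds for any $q\in[0,1]$. It only makes the comparison natural, since the two measures then share the same mean $\kappa_1^{\circledast}=\sum_k p_k$.
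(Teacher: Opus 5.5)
Your proof is correct and is essentially the paper's argument. For Bernoulli $\mu_k$ one has $\overline{\bm{\mu}}=\rho_q$, so your application of Lemma~\ref{Lemma:convolutioninequality} with $\rho_k=\rho_q$ is exactly Lemma~\ref{forthbefbinomprop} (which is this convolution comparison with the average, not the single-factor estimate you guessed), and your monotone coupling supplies the justification for $d_{W^1}(\rho_{p_k},\rho_q)\le|p_k-q|$ that the paper states without proof.
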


\begin{remark}
Combining Proposition \ref{eq:binomial} with the
Cauchy-Schwarz inequality, we obtain
\[
d_{W^1}
\left(
\bm{\mu}^{\circledast},
\nu_B^{q,n}
\right)
\leq
n\operatorname{Var}[p_{I_n}]^{1/2},
\]
where $I_n$ is uniformly distributed on
$\{1,\dots,n\}$.
\end{remark}

\noindent The above result is a particular case of the following lemma, which allows comparisons of $\circledast$-convolutions of measures with the corresponding convolution of their arithmetic average. In the sequel, for a family $\mu_1,\dots, \mu_n$, we will denote by $\overline{\bm{\mu}}$ its average 
\begin{align*}
\overline{\bm{\mu}}
  &:=\frac{1}{n}\sum_{k=1}^n\mu_k.	
\end{align*}
Proposition \ref{eq:binomial} is a direct consequence of the following lemma, which follows by applying  Lemma \ref{Lemma:convolutioninequality} with $\rho_k=\overline{\bm\mu}$ for every $k$.

\begin{lemma}\label{forthbefbinomprop}
Let $\mu_1,\dots,\mu_n\in\mathcal{P}_c(\R)$. Then
\begin{align*}
d_{W^1}
\left(
\bm{\mu}^{\circledast},
\overline{\bm{\mu}}^{\circledast n}
\right)
&\leq
\sum_{k=1}^n
d_{W^1}(\mu_k,\overline{\bm{\mu}}).
\end{align*}
\end{lemma}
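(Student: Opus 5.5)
This lemma is essentially a direct specialization of Lemma~\ref{Lemma:convolutioninequality}, so the plan is to reduce to it with the right choice of measures. Throughout, \(\circledast\in\{*,\boxplus\}\), as in this subsection. Given \(\mu_1,\dots,\mu_n\in\mathcal{P}_c(\R)\), I would set \(\gamma_k:=\mu_k\) and \(\rho_k:=\overline{\bm{\mu}}\) for every \(k\in\{1,\dots,n\}\).

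The first step is to check the hypotheses of Lemma~\ref{Lemma:convolutioninequality} with \(p=1\). Each \(\mu_k\) is compactly supported. The average \(\overline{\bm{\mu}}\) is a finite convex combination of compactly supported probability measures, so it is again a probability measure, and its support lies in the union of the supports of the \(\mu_k\), which is compact. All these measures therefore lie in \(\mathcal{P}_c(\R)\) and have finite first moments.

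The second step is to apply \eqref{eq:keyineq1}. This gives
\[
d_{W^1}\left(\mu_1\circledast\cdots\circledast\mu_n,\ \overline{\bm{\mu}}\circledast\cdots\circledast\overline{\bm{\mu}}\right)
\leq \sum_{k=1}^n d_{W^1}(\mu_k,\overline{\bm{\mu}}).
\]
The left-hand side is exactly \(d_{W^1}(\bm{\mu}^{\circledast},\overline{\bm{\mu}}^{\circledast n})\) by the notation \(\bm{\mu}^{\circledast}\) and the definition of the \(n\)-fold convolution power. This is the claim.

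No step is a genuine obstacle. The only point needing care is that \(\overline{\bm{\mu}}\) stays in \(\mathcal{P}_c(\R)\), so that the convolutions and the non-commutative Wasserstein distance are well defined, and this is immediate.

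For Proposition~\ref{eq:binomial}, I would then take Bernoulli measures \(\mu_k=\rho_{p_k}\). In that case \(\overline{\bm{\mu}}=\rho_q\), and the remaining task is the one-dimensional bound \(d_{W^1}(\rho_{p_k},\rho_q)\leq \mathbbm{d}_{W^1}(\rho_{p_k},\rho_q)=|p_k-q|\). This follows from the monotone coupling, which moves mass \(|p_k-q|\) a distance \(1\).
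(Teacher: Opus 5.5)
Your proposal is correct and is the paper's argument: the paper obtains this lemma by applying Lemma~\ref{Lemma:convolutioninequality} with $\gamma_k=\mu_k$ and $\rho_k=\overline{\bm{\mu}}$ for every $k$. Your check that $\overline{\bm{\mu}}\in\mathcal{P}_c(\R)$ and your monotone-coupling bound $\mathbbm{d}_{W^1}(\rho_{p_k},\rho_q)=|p_k-q|$ for the binomial corollary also agree with what the paper does.
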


The proof of Proposition~\ref{eq:binomial} follows from Lemma~\ref{forthbefbinomprop} by observing that in the particular case where the $\mu_k$ are Bernoulli with success probability $p_k$, then $\overline{\bm{\mu}}$ is Bernoulli with parameter $(p_1+\cdots+p_n)/n$, which yields 
 \begin{align*}
 d_{W^{1}}(\mu_k,\overline{\bm{\mu}})
   &\leq |p_k-(p_1+\cdots+p_n)/n|,	
 \end{align*}
thus implying \eqref{eq:binomapprox}.

\section{Proofs}\label{Proofs}
\noindent In this section we present the proofs of Theorem \ref{eq:lawofrareevents} and Corollary \ref{eq:lawofrareeventsgeneral}.

\subsection{Proof of Theorem \ref{eq:lawofrareevents}}
\noindent The tensorial case follows from the stability of the classical Wasserstein distance under convolution. Indeed, using the one-dimensional representation of the
Wasserstein distance in terms of distribution functions, a direct computation gives, for every $k$,
\[
\mathbbm{d}_{W^1}
\left(
\mu_k,
\Pi_{p_k}^{*}
\right)
=
2\left(p_k-1+e^{-p_k}\right)
\leq
p_k^2.
\]
Consequently,
\begin{align*}
\mathbbm{d}_{W^1}
\left(
\bm{\mu}^{*},
\Pi_{\lambda}^{*}
\right)
&\leq
\sum_{k=1}^n
\mathbbm{d}_{W^1}
\left(
\mu_k,
\Pi_{p_k}^{*}
\right) \leq
\sum_{k=1}^n p_k^2.
\end{align*}
We now consider the free and Boolean cases.\\

\noindent\textit{Step I}\\
We first consider the free case, so throughout this step we set
$\circledast=\boxplus$. For each $k=1,\dots,n$, define
\[
\eta_k:=\Pi_{p_k}^{\circledast},
\quad\quad\quad\quad\quad\quad
\lambda:=\sum_{k=1}^n p_k,
\]
as well as $\bm{\eta}:=(\eta_1,\dots,\eta_n)$ and $\nu:=\Pi_{\lambda}^{\circledast}$. 
By additivity of free cumulants,
\[
\eta_1\circledast\cdots\circledast\eta_n
=
\Pi_{\lambda}^{\circledast},
\]
and therefore $\bm{\eta}^{\circledast}=\nu$. Define the Lindeberg-type interpolation
\begin{align}\label{eq:lindeberginterpolation}
\rho_{k,n}
  &:=
  \mu_1\circledast\cdots\circledast\mu_k
  \circledast
  \eta_{k+1}\circledast\cdots\circledast\eta_n,
\end{align}
for $k=1,\dots,n-1$, and set
\[
\rho_{n,n}:=\bm{\mu}^{\circledast},
\qquad
\rho_{0,n}:=\bm{\eta}^{\circledast}.
\]
By the triangle inequality,
\begin{align}\label{eq:boundfWsums}
d_{W^{1}}(\bm{\mu}^{\circledast},\bm{\eta}^{\circledast})
  &\leq
  \sum_{k=0}^{n-1}
  d_{W^1}(\rho_{k,n},\rho_{k+1,n}).
\end{align}
For $k=0,\dots,n-1$, Lemma~\ref{Lemma:convolutioninequality} yields
\begin{align}\label{eq:boundfWregtospec}
d_{W^1}(\rho_{k,n},\rho_{k+1,n})
  &\leq
  d_{W^1}(\mu_{k+1},\eta_{k+1})
  \leq
  \mathbbm{d}_{W^1}(\mu_{k+1},\eta_{k+1}).
\nonumber
\end{align}
It therefore suffices to bound
\[
\mathbbm{d}_{W^1}(\mu_k,\eta_k).
\]
The case $p_k=0$ is immediate, since then
\[
\mu_k=\eta_k=\delta_0.
\]
We may therefore assume that $p_k>0$. Let
$f:\R\rightarrow\R$ be a $1$-Lipschitz function. Since adding a constant to $f$ does not change the difference of the corresponding integrals, we may assume that $f(0)=0$. Since $\mu_k$ is Bernoulli with success probability $p_k$ and
$m_1[\eta_k]=p_k$, we have
\begin{align*}
\int_{\R}f(x)\eta_k(dx)
-
\int_{\R}f(x)\mu_k(dx)
&=
\int_{\R_+}
\bigl(f(x)-xf(1)\bigr)\eta_k(dx).
\end{align*}
For every $x\geq0$,
\[
|f(x)-xf(1)|
\leq
2x|x-1|.
\]
Indeed, if $0\leq x\leq1$, then
\begin{align*}
|f(x)-xf(1)|
&=
|x(f(x)-f(1))+(1-x)f(x)|
\leq
x(1-x)+(1-x)x
=
2x(1-x).
\end{align*}
If $x\geq1$, then
\begin{align*}
|f(x)-xf(1)|
&\leq
|f(x)-f(1)|+(x-1)|f(1)|
\leq
2(x-1)
\leq
2x(x-1).
\end{align*}
It follows from the Kantorovich-Rubinstein duality that
\begin{align}\label{eq:technicalsplitoneprev2}
\mathbbm{d}_{W^1}(\mu_k,\eta_k)
&\leq
2\int_{\R_+}x|x-1|\eta_k(dx).
\nonumber
\end{align}
Applying the Cauchy--Schwarz inequality, we obtain
\begin{align*}
\mathbbm{d}_{W^1}(\mu_k,\eta_k)
&\leq
2\sqrt{m_1[\eta_k]}
\left(
\int_{\R}x|x-1|^2\eta_k(dx)
\right)^{1/2}\\
&=
2\sqrt{m_1[\eta_k]}
\left(
m_3[\eta_k]-2m_2[\eta_k]+m_1[\eta_k]
\right)^{1/2}.
\end{align*}
Since $\eta_k$ is the free Poisson distribution with intensity $p_k$,
\[
\kappa_r^{\boxplus}[\eta_k]=p_k
\]
for every $r\geq1$. The third-order moment-cumulant identity therefore gives
\[
m_3[\eta_k]-2m_2[\eta_k]+m_1[\eta_k]
=
p_k^3+p_k^2.
\]
It follows that
\begin{align*}
\mathbbm{d}_{W^1}(\mu_k,\eta_k)
&\leq
2p_k^{3/2}(1+p_k)^{1/2}
\leq
3p_k^{3/2}.
\end{align*}
Combining this estimate with \eqref{eq:boundfWsums}, we conclude that
\[
d_{W^{1}}(\bm{\mu}^{\boxplus},\Pi_{\lambda}^{\boxplus})
\leq
3\sum_{k=1}^{n}p_k^{3/2}.
\]

\noindent\textit{Step II}\\
We now consider the Boolean case. By
Lemma~\ref{lem:boolean-bernoulli-support},
\[
\operatorname{supp}(\bm{\mu}^{\uplus})
\subseteq
[0,1+\lambda].
\]
Moreover,
\[
\kappa_1^{\uplus}[\mu_k]=p_k
\qquad\text{and}\qquad
\kappa_2^{\uplus}[\mu_k]=p_k-p_k^2.
\]
By additivity of Boolean cumulants,
\[
\kappa_1^{\uplus}[\bm{\mu}^{\uplus}]
=
\lambda
\qquad\text{and}\qquad
\kappa_2^{\uplus}[\bm{\mu}^{\uplus}]
=
\lambda-\sum_{k=1}^n p_k^2.
\]
so that 
\[
m_1[\bm{\mu}^{\uplus}]
=
\lambda
\qquad\text{and}\qquad
m_2[\bm{\mu}^{\uplus}]
=
\lambda^2+\lambda-\sum_{k=1}^n p_k^2.
\]
Let $X$ be a random variable with distribution
$\bm{\mu}^{\uplus}$, and let $U$ be a uniform random
variable on $[0,1]$, independent of $X$. Define
\[
Y
:=
(1+\lambda)
\mathbbm{1}_{\left\{
U\leq X/(1+\lambda)
\right\}}.
\]
By Lemma~\ref{lem:boolean-bernoulli-support},
\[
0\leq X\leq1+\lambda
\]
almost surely, and therefore  \(X/(1+\lambda)\) defines a conditional probability given $X$. By a direct computation, we have that 
\begin{align*}
\Pb[Y=1+\lambda]
&=
\E\big[
\Pb\big[
U\leq X/(1+\lambda)
\, |\,X
\big]
\big]
=
\frac{\E[X]}{1+\lambda}
=
\frac{\lambda}{1+\lambda}.
\end{align*}
Consequently, $\Pb[Y=0] = 1/(1+\lambda),
$
and hence
\[
\text{Law}(Y)
=
\frac{1}{1+\lambda}\delta_0
+
\frac{\lambda}{1+\lambda}\delta_{1+\lambda}
=
\Pi_{\lambda}^{\uplus}.
\]
Let $\pi$ denote the joint distribution of $(X,Y)$. Its first marginal is $\bm{\mu}^{\uplus}$ and its second marginal is $\Pi_{\lambda}^{\uplus}$. Therefore, $\pi$ is a transport plan for $(\bm{\mu}^{\uplus},
\Pi_{\lambda}^{\uplus})$. Observe that 
\begin{align*}
\E[|X-Y|\mid X=x]
&=
\big(1-x/(1+\lambda)\big)x
+
\frac{x}{1+\lambda}(1+\lambda-x)
=
\frac{2x(1+\lambda-x)}{1+\lambda}.
\end{align*}
It follows that
\begin{align*}
\mathbbm{d}_{W^1}
\left(
\bm{\mu}^{\uplus},
\Pi_{\lambda}^{\uplus}
\right)
&\leq
\E[|X-Y|]
=
\frac{2}{1+\lambda}
\E\left[
X(1+\lambda-X)
\right],
\end{align*}
so that 
\begin{align*}
\mathbbm{d}_{W^1}
\left(
\bm{\mu}^{\uplus},
\Pi_{\lambda}^{\uplus}
\right)
&\leq
\frac{2}{1+\lambda}
\left(
(1+\lambda)m_1[\bm{\mu}^{\uplus}]
-
m_2[\bm{\mu}^{\uplus}]
\right)
=
\frac{2}{1+\lambda}
\sum_{k=1}^n p_k^2
\leq
2\sum_{k=1}^n p_k^2.
\end{align*}
The result now follows from
\[
d_{W^1}
\left(
\bm{\mu}^{\uplus},
\Pi_{\lambda}^{\uplus}
\right)
\leq
\mathbbm{d}_{W^1}
\left(
\bm{\mu}^{\uplus},
\Pi_{\lambda}^{\uplus}
\right).
\]

\subsection{Proof of Corollary
\ref{eq:lawofrareeventsgeneral}}

For each $k=1,\dots,n$, define $p_k:=\mu_k[\{1\}]$ and let
\[
\beta_k
:=
(1-p_k)\delta_0+p_k\delta_1.
\]
Set
\[
\bm{\beta}:=(\beta_1,\dots,\beta_n),
\qquad
\lambda:=\sum_{k=1}^n p_k.
\]
Define $T:\R\to\{0,1\}$ by
\[
T(x)
:=
\begin{cases}
1, & x=1,\\
0, & x\neq1.
\end{cases}
\]
The pushforward of $\mu_k$ under $T$ is $\beta_k$. Hence
the joint distribution of $(X,T(X))$, where
$X$ has distribution $\mu_k$, is a tensor transport plan
between $\mu_k$ and $\beta_k$. Consequently,
\begin{align*}
\mathbbm{d}_{W^1}(\mu_k,\beta_k)
&\leq
\int_{\R}|x-T(x)|\,\mu_k(dx)
=
\int_{\R\setminus\{0,1\}}|x|\,\mu_k(dx).
\end{align*}

For the tensor convolution, the stability of the classical
Wasserstein distance under convolution gives
\begin{align*}
\mathbbm{d}_{W^1}
\left(
\bm{\mu}^{*},
\bm{\beta}^{*}
\right)
&\leq
\sum_{k=1}^n
\mathbbm{d}_{W^1}(\mu_k,\beta_k)
\leq
\sum_{k=1}^n
\int_{\R\setminus\{0,1\}}|x|\,\mu_k(dx).
\end{align*}
By Theorem~\ref{eq:lawofrareevents},
\[
\mathbbm{d}_{W^1}
\left(
\bm{\beta}^{*},
\Pi_{\lambda}^{*}
\right)
\leq
\sum_{k=1}^n p_k^2.
\]
The tensor estimate follows from the triangle inequality. For the free convolution, Lemma~\ref{Lemma:convolutioninequality}
and the inequality
\[
d_{W^1}(\mu_k,\beta_k)
\leq
\mathbbm{d}_{W^1}(\mu_k,\beta_k)
\]
give
\begin{align*}
d_{W^1}
\left(
\bm{\mu}^{\boxplus},
\bm{\beta}^{\boxplus}
\right)
&\leq
\sum_{k=1}^n
d_{W^1}(\mu_k,\beta_k)
\leq
\sum_{k=1}^n
\int_{\R\setminus\{0,1\}}|x|\,\mu_k(dx).
\end{align*}
By Theorem~\ref{eq:lawofrareevents},
\[
d_{W^1}
\left(
\bm{\beta}^{\boxplus},
\Pi_{\lambda}^{\boxplus}
\right)
\leq
3\sum_{k=1}^n p_k^{3/2}.
\]
The free estimate follows from the triangle inequality.

\appendix 

\section{Technical lemmas}\label{secLlemapp}
\noindent In this section we present a technical lemma that was utilized in the proof of Theorem \ref{eq:lawofrareevents}
\begin{lemma}\label{lem:boolean-bernoulli-support}
For $1\leq k\leq n$, define 
\[
\beta_k=(1-p_k)\delta_0+p_k\delta_1,
\]
where $0\leq p_k\leq 1$, and set $\lambda:=p_1+\cdots+p_n$, as well as 
\[
\mu:=\beta_1\uplus\cdots\uplus\beta_n.
\]
Then
\[
\operatorname{supp}(\mu)\subseteq[0,1+\lambda].
\]
\end{lemma}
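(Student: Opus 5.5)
We use the Boolean self-energy (the $F$-transform). For a probability measure $\gamma$ on $\R$ let $G_\gamma(z)=\int_\R (z-x)^{-1}\gamma(dx)$ be its Cauchy transform, $F_\gamma:=1/G_\gamma$, and $E_\gamma(z):=z-F_\gamma(z)$.

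Boolean convolution is characterised by additivity of $E$:
\[
E_{\beta_1\uplus\cdots\uplus\beta_n}=\sum_{k=1}^n E_{\beta_k}.
\]
This is the analytic form of the additivity of Boolean cumulants in Lemma~\ref{lem:cumulantproperties}, since $E_\gamma(z)=\sum_{r\geq1}\kappa_r^{\uplus}[\gamma]z^{1-r}$ near infinity. For a single Bernoulli $\beta_k$ we have $G_{\beta_k}(z)=\frac{z-(1-p_k)}{z(z-1)}$. A short computation then gives
\[
E_{\beta_k}(z)=\frac{p_k z}{z-(1-p_k)},
\]
which is a rational function with a single pole at $1-p_k\in[0,1]$.

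We then obtain an explicit equation for the atoms of $\mu$. Since $F_\mu(z)=z-\sum_k \frac{p_kz}{z-(1-p_k)}$ is rational, $G_\mu=1/F_\mu$ is rational. Hence $\mu$ is purely atomic, and its atoms are exactly the real zeros of $F_\mu$.

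\begin{itemize}
\item $z=0$ is a zero of $F_\mu$, corresponding to a possible atom at $0$.
\item Every other atom $x\neq0$ satisfies
\[
h(x):=\sum_{k=1}^n \frac{p_k}{x-(1-p_k)}=1 .
\]
\end{itemize}

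It remains to locate the solutions of $h(x)=1$. Terms with $p_k=0$ vanish. Write $a_k:=1-p_k\in[0,1)$ for the indices with $p_k>0$.

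\begin{itemize}
\item \emph{Negative side.} For $x<0$, every term $\frac{p_k}{x-a_k}$ is negative, so $h(x)<0\neq1$. Hence no atom lies in $(-\infty,0)$.
\item \emph{Large positive side.} For $x>1+\lambda$ we have $x-a_k>\lambda+p_k\geq\lambda$. Therefore $h(x)<\sum_k p_k/\lambda=1$, so no solution lies beyond $1+\lambda$.
\end{itemize}

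The remaining zeros of $F_\mu$ are real and lie between consecutive poles or just to the right of the largest pole; all of these points are in $[0,1+\lambda]$. Hence $\operatorname{supp}(\mu)\subseteq[0,1+\lambda]$.

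The main obstacle is justifying that the support of $\mu$ is exactly the zero set of $F_\mu$ in $\R$. Two facts are needed. First, $G_\mu$ is a rational Pick-type function with only simple real poles, because $F_\mu$ maps the upper half-plane to itself and $F'_\mu>0$ on the real line away from its poles. Second, a Cauchy transform $G_\mu$ that is analytic across an interval forces $\mu$ to give zero mass to that interval, by Stieltjes inversion. Alternatively, and more elementarily, one can argue from the moments: bound $m_r[\mu]$ via the Boolean moment–cumulant formula to show $\int x^{r}\,d\mu\leq(1+\lambda)^r$, and $\int x^{2r+1}d\mu\ge0$ type arguments. However, the $F$-transform route is cleaner, and it is the one I would carry out.
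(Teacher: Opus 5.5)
Your proposal is correct and follows essentially the same route as the paper. Both arguments use additivity of the Boolean self-energy $z-F$, the Bernoulli formula $p_kz/(z-1+p_k)$, and sign arguments showing that $F_\mu$ has no real zeros on $(-\infty,0)$ or $(1+\lambda,\infty)$; your termwise bound $h(x)<1$ replaces the paper's monotonicity of $H$ together with $H(1+\lambda)\geq0$, and your Stieltjes-inversion remark justifies the step ``atoms are exactly the zeros of $F_\mu$'', which the paper simply asserts (treat the trivial case $\lambda=0$ separately, since you divide by $\lambda$).
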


\begin{proof}
If $\lambda=0$, then $p_k=0$ for every $k$, and hence
$\mu=\delta_0$. The result is therefore immediate. We may
thus assume that $\lambda>0$. For a compactly supported probability measure $\gamma$, define
\[
G_\gamma(z)
:=
\int_{\R}\frac{1}{z-x}\,\gamma(dx),
\qquad
F_\gamma(z)
:=
\frac{1}{G_\gamma(z)},
\]
as well as 
\[
K_\gamma(z)
:=
z-F_\gamma(z).
\]
Recall that Boolean convolution satisfies
\[
K_{\gamma\uplus\rho}(z)
=
K_\gamma(z)+K_\rho(z).
\]
We can easily check that for every $1\leq k\leq n$,
\[
F_{\beta_k}(z)
=
\frac{z(z-1)}{z-1+p_k},
\]
and therefore
\[
K_{\beta_k}(z)
=
\frac{p_kz}{z-1+p_k}.
\]
From here it follows that 
\[
F_\mu(z)
=
z\big(
1-\sum_{k=1}^n
 p_k/(z-1+p_k)
\big).
\]
For $x>1$, define
\[
H(x)
:=
1-\sum_{k=1}^n
\frac{p_k}{x-1+p_k}.
\]
The function $H$ is strictly increasing on $(1,\infty)$,
since
\[
H'(x)
=
\sum_{k=1}^n
\frac{p_k}{(x-1+p_k)^2}
>
0.
\]
Moreover,
\[
H(1+\lambda)
=
1-\sum_{k=1}^n
\frac{p_k}{\lambda+p_k}
\geq
1-\sum_{k=1}^n\frac{p_k}{\lambda}
=
0.
\]
Hence $H(x)>0$ for every $x>1+\lambda$, and consequently
\[
F_\mu(x)=xH(x)>0,
\]
for $x>1+\lambda$. Since $F_\mu$ is a rational function, so is $G_\mu(z)=1/F_\mu(z)$, and consequently, $\mu$ is a finite atomic measure with each support point being a pole of $G_\mu$. It follows that $\mu$ is finitely supported and  the support points of $\mu$ are precisely the poles of
$G_\mu$, which are precisely  the zeros of $F_\mu=1/G_\mu$. Since $F_\mu(x)>0$ for every $x>1+\lambda$, then  $G_\mu$ has no poles in the region $x>1+\lambda$, and hence no support point of $\mu$ can lie above $1+\lambda$.\\

\noindent To rule out negative elements in the support of $\mu$, we take $x<0$, so that $x-1+p_k<0$. We then observe that 
\[
H(x)
=
1-\sum_{k=1}^n
\frac{p_k}{x-1+p_k}
\geq 1.
\]
Consequently, $F_\mu(x)=xH(x)<0$, thus implying that  $F_\mu$ has no zeros on $(-\infty,0)$. By an argument analogous as before, we deduce that  $\mu$ has no support points outside $[0,1+\lambda]$. The result follows from here
\end{proof}

\noindent \textbf{Acknowledgements}\\
Arturo Jaramillo Gil was supported by
the grant CBF2023-2024-2088.

\bibliographystyle{plain}
\bibliography{bibibi}

\end{document}